\documentclass[11pt]{amsart}
\usepackage{amsthm}
\usepackage{latexsym}
\usepackage[dvips]{graphicx}
\usepackage[dvips]{color}
\usepackage{a4wide}

\theoremstyle{plain}
\newtheorem*{theorem*}{Theorem}
\newtheorem*{lemma*} {Lemma}
\newtheorem*{corollary*} {Corollary}
\newtheorem*{proposition*}{Proposition}
\newtheorem*{conjecture*}{Conjecture}
\newtheorem{theorem}{Theorem}[section]
\newtheorem{lemma}[theorem]{Lemma}

\theoremstyle{remark}

\newtheorem*{definition}{Definition}

\theoremstyle{definition}

\begin{document}

\title [Disk surgery and the primitive disk complexes]
{Disk surgery and the primitive disk complexes of the $3$-sphere}

\author[S. Cho]{Sangbum Cho}
\address{Department of Mathematics Education,
Hanyang University, Seoul 04763, Korea}
\email{scho@hanyang.ac.kr}

\author[Y. Koda]{Yuya Koda}
\address{Department of Mathematics,
Hiroshima University, 1-3-1 Kagamiyama,
Higashi-Hiroshima, 739-8526, Japan}
\email{ykoda@hiroshima-u.ac.jp}

\author[J. H. Lee]{Jung Hoon Lee}
\address{Department of Mathematics and Institute of Pure and Applied Mathematics,
Chonbuk National University, Jeonju 54896, Korea}
\email{junghoon@jbnu.ac.kr}

\subjclass[2010]{Primary: 57N10}
\keywords{Heegaard splitting, disk complex, primitive disk, disk surgery}

\begin{abstract}
Given a genus-$g$ Heegaard splitting of the $3$-sphere with $g \ge 3$,
we show that the primitive disk complex for the splitting is not weakly closed under disk surgery operation.
That is, there exist two primitive disks in one of the handlebodies of the splitting such that any disk surgery on one along the other one yields no primitive disks.
\end{abstract}

\maketitle

\section{Introduction}\label{sec1}

It is well known that any closed orientable $3$-manifold can be decomposed into two handlebodies $V$ and $W$ of the same genus $g$,
which we call a {\it genus-$g$ Heegaard splitting} of the manifold.
We denote the splitting by the triple $(V, W; \Sigma)$ where $\Sigma = \partial V = \partial W$ is a closed orientable surface,
called a {\it Heegaard surface}, of genus $g$.
In particular, the $3$-sphere admits a Heegaard splitting of each genus $g \ge 0$, and
it was shown in \cite{Waldhausen} that the splitting is unique up to isotopy for each genus.
A Heegaard splitting $(V,W; \Sigma)$ of a $3$-manifold $M$ is said to be {\it stabilized}
if there exists essential disks $D$ and $\overline{D}$ in $V$ and $W$ respectively such that
$\partial D$ intersects $\partial \overline{D}$ transversely in a single point.
A $3$-manifold $M$ admits a stabilized Heegaard splitting of genus-$2$ if and only if
$M$ is one of the $3$-sphere, $S^2 \times S^1$ or a lens space $L(p, q)$.

For a handlebody $V$ of genus $g \ge 2$, the {\it disk complex} $\mathcal{K}(V)$ of $V$ is the simplicial complex defined as follows.
The vertices are the isotopy classes of compressing disks in $V$, and
a collection of distinct $k+1$ vertices spans a $k$-simplex if the vertices are represented by pairwise disjoint disks.
The disk complex $\mathcal{K}(V)$ is $(3g - 4)$-dimensional and is not locally finite.
When the handlebody $V$ is one of the handlebodies of a stabilized genus-$g$ Heegaard splitting $(V, W; \Sigma)$, with $g \geq 2$,
the disk complex $\mathcal{K}(V)$ has a special kind of subcomplex, called the {\it primitive disk complex}.
The primitive disk complex, denoted by $\mathcal P(V)$, is the full subcomplex of $\mathcal{K}(V)$ spanned by the vertices of {\it primitive disks}.
A compressing disk $D$ in $V$ is called {\it primitive} if
there exists a compressing disk $\overline{D}$ in $W$ such that
$\partial D$ intersects $\partial \overline{D}$ transversely in a single point.
We call such a disk $\overline{D}$ a {\it dual disk} of $D$.

For the genus-$2$ Heegaard splitting $(V, W; \Sigma)$ of each of the $3$-sphere, $S^2 \times S^1$ and lens spaces $L(p, q)$,
the structure of the primitive disk complex $\mathcal P(V)$ is fully studied in \cite{Cho08}, \cite{Cho13}, \cite{CK14}, \cite{CK16} and \cite{CK18}.
Understanding the structure of the primitive disk complexes enables us to obtain finite presentations of the mapping class groups of the splittings
by investigating the simplicial action of the group on the primitive disk complex.
Actually, it was shown that the primitive disk complex $\mathcal P(V)$ is contractible
for the genus-$2$ Heegaard splitting of each of the $3$-sphere, $S^2 \times S^1$ and some lens spaces.
Furthermore, the quotient of $\mathcal P(V)$ by the action of the mapping class group of the splitting is a simple finite complex for each case, and
the group is presented easily in terms of the isotropy subgroups of the simplices of the quotient.

The contractibility of $\mathcal P(V)$ in the case of the genus-$2$ splittings is based on the fact that
$\mathcal P(V)$ is {\it closed under the disk surgery operation}.
In other words, given any two primitive disks in $V$ intersecting each other,
any surgery on one disk along the other one always yields a primitive disk, whose meaning explained in detail in the next section.
In particular, it was shown in \cite{Cho08} that the primitive disk complex for the genus-$2$ Heegaard splitting of the $3$-sphere is closed under disk surgery operation, and
so it has been conjectured that it is also true for the higher genus splittings of the $3$-sphere.
The main result of this work is to show that it is not true.
In fact, we show further that, in the case of genus $g \geq 3$, there exist two primitive disks such that ``no'' surgery on one along the other one yields a primitive disk.
The main result is stated as follows.

\begin{theorem}
\label{thm: main theorem}
Let $(V, W; \Sigma)$ be a genus-$g$ Heegaard splitting of the $3$-sphere with $g \ge 3$.
Then the primitive disk complex $\mathcal P(V)$ is not closed under the disk surgery operation.
In fact, $\mathcal P(V)$ is not even weakly closed under the disk surgery operation.
\end{theorem}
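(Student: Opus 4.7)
The plan is to translate primitivity into combinatorial group theory via the fundamental group of $W$, and then exhibit an explicit pair of primitive disks whose every surgery output has non-primitive boundary.

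First, I would fix a complete meridian system $\{\bar D_1, \ldots, \bar D_g\}$ for $W$ together with a dual system $\{D_1, \ldots, D_g\}$ for $V$ satisfying $|\partial D_i \cap \partial \bar D_j| = \delta_{ij}$. This identifies $\pi_1(W)$ with the free group $F_g = \langle x_1, \ldots, x_g \rangle$, and any essential disk $D \subset V$ in minimal position with respect to $\bigcup_j \partial \bar D_j$ yields a cyclic word $w_D \in F_g$ by reading off the signed intersections of $\partial D$ with the curves $\partial \bar D_j$. The criterion to be used is that $D$ is primitive if and only if $w_D$ represents a primitive element of $F_g$. Theorem~\ref{thm: main theorem} thus reduces to exhibiting two primitive cyclic words $w_D, w_E$ in $F_g$, realised by disks $D, E \subset V$ meeting essentially in arcs, such that every word obtained by any surgery of one along the other fails to be primitive.

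Second, I would construct the example directly for $g = 3$ and then extend to all $g \geq 3$ by boundary-connect-summing on trivial handle pairs $(D_i, \bar D_i)$, $i \geq 4$, which remain disjoint from $D$ and $E$. For $g = 3$, the disks are specified by drawing $\partial D$ and $\partial E$ on $\Sigma$ in a standard Heegaard diagram; I would tune the arcs of $D \cap E$ so that the outermost subdisks on either side read off prescribed subwords in $F_3$. The target is to choose $\partial D, \partial E$ to be basis elements of $F_3$, while every splitting-and-grafting operation along any outermost arc produces a cyclic word whose Whitehead-minimal form has cyclic length at least two and involves at least two distinct generators, which rules out primitivity.

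Third, I would enumerate every candidate surgery disk. An outermost arc $\alpha$ of $D \cap E$ on $E$ cuts off a subdisk $\delta \subset E$ with $\partial \delta = \alpha \cup \beta$ and $\beta \subset \Sigma$; this produces two new disks whose boundary cyclic words are obtained by splitting $w_D$ at the two endpoints of $\alpha$ on $\partial D$ and concatenating each arc with the word $v_\beta$ read off by $\beta$ (with appropriate signs). The symmetric operation interchanges the roles of $D$ and $E$. The number of outputs is finite and explicit; verifying with Whitehead's algorithm that each is non-primitive in $F_g$ completes the proof.

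The main obstacle is the joint combinatorial design in the second step: both $w_D$ and $w_E$ must be primitive, while every one of the surgery words must simultaneously fail to be primitive. Because primitivity in $F_g$ is not detectable by abelianization when $g \geq 3$, Whitehead's algorithm (or a geometric disk-system analogue) must be invoked for every surgery output, and the construction must be tuned so that each output is already Whitehead-minimal. A secondary technical point is confirming that the prescribed cyclic words are realised by simple closed curves on $\Sigma$ that bound disks in $V$ with exactly the desired intersection pattern; this is handled by an explicit drawing on the genus-$g$ Heegaard diagram of $S^3$.
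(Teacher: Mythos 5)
Your overall strategy is exactly the paper's: use Gordon's criterion to translate primitivity of a disk in $V$ into primitivity of the boundary word in $\pi_1(W) \cong F_g$, build an explicit pair $D, E$ for $g=3$, enumerate the finitely many surgery outputs, certify each output word as non-primitive, and extend to $g \ge 3$ by adding handles disjoint from the construction. However, there is a genuine gap, and it is the entire mathematical content of the theorem: you never produce the pair $(D,E)$. You correctly identify "the joint combinatorial design" as the main obstacle --- two primitive disks all of whose surgeries are non-primitive --- but you leave it as an obstacle rather than resolving it. An existence theorem of this type is not proved by describing the properties the example must have and the algorithm that would verify them; the example must be exhibited. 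The paper's proof consists almost entirely of this exhibition: a disk $D$ dual to $\overline{D}_2$, and a disk $E$ built as a band sum of two parallel copies of a non-primitive disk with a half-twisted band around $\partial\overline{D}_3$, whose boundary word $(x_1 x_2^{-1} x_1 x_2^{-1} x_1 x_2 x_1^{-1} x_2 x_2 x_1^{-1})(x_1 x_2^{-1} x_2^{-1} x_1 x_2^{-1} x_1^{-1} x_2 x_1^{-1} x_2 x_1^{-1})x_2$ freely reduces to $x_2$; the set $D \cap E$ consists of two arcs, and all surgeries yield one of just two disks, with words $x_1 x_2^{-1} x_1 x_2 x_1^{-1} x_2$ and $x_1 x_2^{-1} x_1 x_2^{-1} x_1 x_2 x_1^{-1} x_2 x_2 x_1^{-1} x_2$.

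Two further points where the paper's route is cleaner than what you propose. First, you plan to run Whitehead's algorithm on cyclic words in $F_g$ and to "tune" the construction so each output is Whitehead-minimal; this is unnecessary. The paper arranges both surgery outputs to be disjoint from $\overline{D}_3$, so their words lie in $\langle x_1, x_2\rangle$, where the Osborne--Zieschang criterion applies: a cyclically reduced word in rank two containing both $x_1$ and $x_1^{-1}$ (and both $x_2$ and $x_2^{-1}$) is never primitive. This replaces Whitehead computations by an inspection. Second, passing from "not primitive in $F_2$" to "not primitive in $F_3$" is not automatic at the level of words and is handled in the paper by a geometric lemma (a dual disk can be surgered off the extra meridians); your outline does not address this step at all, since you intend to work in $F_g$ throughout, but if you adopt the rank-two reduction you would need it.
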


Throughout the paper, any disks (except subdisks of a disk) in a 3-manifold are always assumed to be properly embedded, and
their intersection is transverse and minimal up to isotopy.
In particular, if a disk $D$ intersects a disk $E$, then $D \cap E$ is a collection of pairwise disjoint arcs that are properly embedded in both $D$ and $E$.
For convenience, we will not distinguish disks from their isotopy classes in their notation.

\section{Disk surgery operation}
\label{sec: Surgery paths in disk complexes}

Let $M$ be a compact, orientable, irreducible $3$-manifold with compressible boundary.
The {\it disk complex} $\mathcal{K}(M)$ for $M$ is a simplicial complex defined as follows.
The vertices are the isotopy classes of compressing disks in $M$, and
a collection of distinct $k+1$ vertices spans a $k$-simplex if and only if the vertices are represented by pairwise disjoint disks.

Let $D$ and $E$ be compressing disks in $M$ with $D \cap E \neq \emptyset$.
Let $\Delta$ be a disk cut off from $E$ by an outermost arc $\delta$ of $D \cap E$ in $E$ such that $\Delta \cap D = \delta$.
We call such a subdisk $\Delta$ an {\it outermost subdisk} of $E$ cut off by $D \cap E$.
The arc $\delta$ cuts $D$ into two subdisks, say $C_1$ and $C_2$.
Let $D_1 = C_1 \cup \Delta$ and $D_2 = C_2 \cup \Delta$.
By a slight isotopy, the two disks $D_1$ and $D_2$ can be moved to be disjoint from $D$.
We say that $D_1$ and $D_2$ are the disks obtained by {\it surgery} on $D$ along $E$ (with the outermost subdisk $\Delta$).
Of course there are many choices of the outermost subdisk of $E$ cut off by $D \cap E$, and the resulting two disks from surgery depend on the choice of the outermost subdisks.
We note that each of $D_1$ and $D_2$ has fewer arcs of intersection with $E$ than $D$ had since at least the arc $\delta$ no longer counts.
Further, if $D$ is non-separating, at least one of $D_1$ and $D_2$ is non-separating.

\begin{definition}
\label{def: convexity}
Let $\mathcal X$ be a full subcomplex of $\mathcal{K}(M)$.
\begin{enumerate}
\item
We say that $\mathcal X$ is {\it closed under disk surgery operation}
if for any disks $D$ and $E$ with $D \cap E \neq \emptyset$ representing vertices of $\mathcal X$,
every surgery on $D$ along $E$ always yields a disk representing a vertex of $\mathcal X$.
\item
We say that $\mathcal X$ is {\it weakly closed under disk surgery operation}
if for any disks $D$ and $E$ with $D \cap E \neq \emptyset$ representing vertices of $\mathcal X$,
there exists a surgery on $D$ along $E$ which yields a disk representing a vertex of $\mathcal X$.
\end{enumerate}
\end{definition}

It is clear that the ``closedness'' implies the ``weak closedness''.
For the weak closedness, it is enough to find only an outermost subdisk $\Delta$ of $E$ such that
at least one of the two disks obtained from surgery on $D$ along $E$ with $\Delta$ yields a disk representing a vertex of $\mathcal X$,
while for the closedness, we need to show that the surgery with ``any'' outermost subdisk always yields a disk representing a vertex of $\mathcal X$.

It is easy to see that the disk complex $\mathcal K(M)$ itself is closed under disk surgery operation, and
so is the {\it non-separating disk complex}, denoted by $\mathcal D(M)$, the full subcomplex of $\mathcal K(M)$ spanned by all vertices of non-separating disks.
The weak closedness with the closedness have served as a useful tool to understand the structure of various subcomplexes of the disk complex, for example we have the following.

\begin{theorem}
\label{thm:convexity implies contractibility}
Let $\mathcal X$ be a full subcomplex of $\mathcal{K}(M)$.
\begin{enumerate}
   \item If $\mathcal X$ is weakly closed under disk surgery operation, then $\mathcal X$ is connected.
   \item If $\mathcal X$ is closed under disk surgery operation, then $\mathcal X$ is contractible.
\end{enumerate}
\end{theorem}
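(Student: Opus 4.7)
The plan is to prove both parts by induction on geometric intersection number, in the spirit of McCullough's and Cho's treatment of disk complexes.

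For part (1), I would take two vertices $D, E$ of $\mathcal X$ and show they lie in the same path-component by induction on $n = |D \cap E|$. If $n = 0$, then since $\mathcal X$ is a full subcomplex of $\mathcal K(M)$, the two disjoint disks $D$ and $E$ already span an edge of $\mathcal X$. If $n \geq 1$, weak closedness provides a surgery on $D$ along $E$ whose outcome contains a disk $D' \in \mathcal X$. By the surgery construction recalled in Section 2, $D'$ may be isotoped off $D$, so $\{D, D'\}$ spans an edge of $\mathcal X$, while each surgery strictly reduces intersection with $E$, so $|D' \cap E| < n$. The inductive hypothesis yields an edge-path from $D'$ to $E$, which concatenated with the edge $DD'$ connects $D$ to $E$.

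For part (2), the aim is to show that any simplicial map $f : K \to \mathcal X$ from a finite simplicial complex $K$ is null-homotopic. Fix a reference vertex $E_0$ of $\mathcal X$, and define the complexity
\[
c(f) \;=\; \sum_{v \in K^{(0)}} |f(v) \cap E_0|,
\]
where each pair $f(v), E_0$ is taken in minimal position. I would argue by induction on $c(f)$. In the base case $c(f) = 0$, every $f(v)$ is disjoint from $E_0$; since each simplex of $K$ maps to a simplex of $\mathcal X$ and $\mathcal X$ is full, the vertices $\{f(v) : v \in \sigma\} \cup \{E_0\}$ represent pairwise disjoint disks and hence span a simplex of $\mathcal X$ for every simplex $\sigma$ of $K$. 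Thus $f$ extends to the cone $K * \{*\}$ by sending the apex to $E_0$, giving a null-homotopy.

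For the inductive step, with $c(f) > 0$, choose a vertex $v$ of $K$ maximizing $|f(v) \cap E_0|$ and an outermost subdisk $\Delta \subset E_0$ cut off by $f(v) \cap E_0$. Strong closedness guarantees that both surgery disks $D_1, D_2$ lie in $\mathcal X$, are disjoint from each other and from $f(v)$, and satisfy $|D_i \cap E_0| < |f(v) \cap E_0|$. The plan is then a local simplicial modification of $f$ near $v$: subdivide the star of $v$ so as to introduce new vertices mapping to $D_1$ and $D_2$, and reassemble so that the resulting simplicial map is homotopic to $f$ but has strictly smaller complexity. The main obstacle is that $D_1$ or $D_2$ may meet the image $f(w)$ of a neighbor $w$ of $v$, since the surgery disks absorb a piece of $E_0$. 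This forces a double induction: I would either weight the complexity lexicographically (first by the maximum intersection number appearing, then by the number of vertices attaining it), or simultaneously apply closedness to reduce the intersections of the $f(w)$ with $E_0$ via further surgeries, iterating until the replacement at $v$ becomes simplicially valid. Verifying that this multi-step replacement can always be organized to give a genuine simplicial homotopy with strictly decreased complexity is the technical crux; once this is accomplished, the induction terminates with a map of complexity zero, reducing to the cone argument above.
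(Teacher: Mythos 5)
Your part (1) is correct and is essentially the paper's own argument: weak closedness supplies a surgery disk $D'$ in $\mathcal X$ that is disjoint from $D$ and has strictly fewer arcs of intersection with $E$, and induction on $|D\cap E|$ (with fullness of $\mathcal X$ handling the base case $D\cap E=\emptyset$) produces an edge-path from $D$ to $E$. For part (2) the paper offers no proof at all; it defers to McCullough and to Cho, where the statement is established by exactly the complexity-reducing homotopy you outline. So your sketch has the right overall shape, but the step you yourself flag as ``the technical crux'' is the entire content of the proof, and neither of the two fixes you propose (a lexicographic complexity, or iterated surgeries on the neighbors' images) visibly closes it: both still require an argument that the replacement disks can be made disjoint from the images of all vertices in the link of $v$, which is precisely what is missing.

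The device that makes the induction work in the cited proofs is a different choice of where to surger. Do not choose $v$ maximizing $|f(v)\cap E_0|$; instead, among \emph{all} outermost subdisks of $E_0$ cut off by $f(u)\cap E_0$ as $u$ ranges over all vertices of $K$ with $f(u)\cap E_0\neq\emptyset$, choose one, say $\Delta$ cut off by an arc of $f(v)\cap E_0$, that is minimal with respect to inclusion in $E_0$. If $w$ is a vertex adjacent to $v$, then $f(w)\cap f(v)=\emptyset$, so no arc of $f(w)\cap E_0$ crosses $\partial\Delta$; hence if $f(w)$ met $\Delta$, an outermost arc of $f(w)\cap E_0$ inside $\Delta$ would cut off an outermost subdisk of $E_0$ strictly contained in $\Delta$, contradicting minimality. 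Therefore $\Delta$, and hence both surgered disks $D_1$ and $D_2$, can be isotoped off $f(w)$ for every $w$ in the link of $v$ as well as off $f(v)$. Full closedness (not merely weak closedness) guarantees that $D_1$ and $D_2$ are vertices of $\mathcal X$ for this particular $\Delta$, which is forced on you and cannot be selected freely; fullness of $\mathcal X$ then makes the local replacement of $f(v)$ by $D_1$ (or a subdivision introducing $D_1$ and $D_2$) a simplicial map homotopic to $f$ through the simplex spanned by $f(v)$, $D_1$, $D_2$ and the $f(w)$, and the complexity $c(f)$ strictly decreases. With this modification your induction terminates in the cone argument as you describe; without it, the proposal has a genuine gap at its central step.
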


The first statement of the theorem is easy to verify.
Whenever we have two vertices $D$ and $E$ of $\mathcal X$ far from each other, that is, $D \cap E \neq \emptyset$, then
we have an outermost subdisk $\Delta$ of $E$ cut off by $D \cap E$ such that
the surgery on $D$ along $E$ with $\Delta$ yields a disk, say $D_1$, representing a vertex of $\mathcal X$.
The vertex of $D_1$ is joined by an edge to $D$.
If $D_1 \cap E \neq \emptyset$, we do surgery on $D_1$ along $E$ to have a vertex of $\mathcal X$ and so on.
Then eventually we have a path in $\mathcal X$ from $D$ to $E$.
The second statement was essentially proved in \cite{McCullough} and updated in \cite{Cho08}.
In \cite{Cho08}, the contractibility is proved in the case where $M$ is a handlebody, but
the proof is still valid for an arbitrary irreducible manifold with compressible boundary.

From Theorem \ref{thm:convexity implies contractibility}, we see that the disk complex $\mathcal K(M)$ and the non-separating disk complex $\mathcal D(M)$ are all contractible.
Recall that when a handlebody $V$ is one of the handlebodies of the genus-$g$ Heegaard splitting $(V, W; \Sigma)$, with $g \geq 2$, of the $3$-sphere, $S^2 \times S^1$ or a lens space $L(p, q)$, the primitive disk complex $\mathcal P(V)$ is the full subcomplex of $\mathcal{K}(V)$ spanned by the vertices of primitive disks in $V$.
The followings are known results on the primitive disk complexes $\mathcal P(V)$ for the genus-$2$ splittings (see \cite{Cho08, Cho13, CK14, CK16}).

\begin{enumerate}
\item
For the genus-$2$ splittings of $3$-sphere and $S^2 \times S^1$, the complex $\mathcal P(V)$ is closed under disk surgery operation, and hence they are all contractible.
\item
For the genus-$2$ splittings of lens spaces $L(p,q)$ with $1 \le q \le p/2$, if $p \equiv \pm 1 \pmod q$, then
$\mathcal P(V)$ is closed under disk surgery operation and hence it is contractible.
If $p \not\equiv \pm 1 \pmod q$, then $\mathcal P(V)$ is not weakly closed under disk surgery operation, and in fact, it is not connected.
\end{enumerate}

We remark that the weak closedness and closedness under disk surgery operation are just sufficient conditions for connectivity and contractibility respectively.
Hence it is still an open question whether the primitive disk complex $\mathcal P(V)$ in the case of $g \ge 3$ for the $3$-sphere is connected, contractible or not.

\section{Primitive curves on the boundary of a handlebody}
\label{Primitive curves on the boundary of a handlebody}

In this section, we fix a handlebody $W$ of genus $g \geq 2$,
and a {\it complete meridian system} $\{\overline{D}_1, \overline{D}_2, \ldots, \overline{D}_g \}$ for $W$.
That is $\overline{D}_1, \overline{D}_2, \ldots, \overline{D}_g$ are mutually disjoint essential disks in $W$ whose union cuts $W$ into a $3$-ball.
A simple closed curve $l$ on $\partial W$ is said to be {\it primitive} if there exists a disk $\overline{D}$ properly embedded in $W$ such that
the two simple closed curves $l$ and $\partial \overline{D}$ intersect transversely in a single point.
We call such a disk $\overline{D}$ a {\it dual disk} of $l$.

Suppose that the curve $l$ on $\partial W$ meets the union of $\partial \overline{D}_1 \cup \partial \overline{D}_2 \cup \cdots \cup \partial \overline{D}_g$
of the oriented circles transversely and minimally.
Fixing an orientation of $l$, and assigning the symbol $x_i$ to $\partial \overline{D}_i$ for each $i \in \{1,2, \ldots , g\}$,
the curve $l$ represents the conjugacy class $c(l)$ of an element of the free group $\pi_1(W)$ of rank $g$.
That is, $l$ determines a word $w$ in $\{ x_1^{\pm 1}, x_2^{\pm 1}, \ldots, x_g^{\pm 1} \}$ (up to cyclic permutation)
that can be read off from the intersections of $l$ with each of $\partial \overline{D}_i$'s.
Hence $l$ represents an element $[w]$ of the free group $\pi_1 (W) = \langle x_1, x_2 , \ldots , x_g \rangle$ (up to conjugation).
Recall that an element of a free group is said to be {\it primitive} if it is a member of some of its free basis.
If an element of a free group is primitive, then any element of its conjugacy class is also primitive.
Thus we simply say that a simple closed curve $l$ represents a primitive element of $\pi_1 (W)$ if
a member (thus every member) of $c(l)$ is primitive.
The following lemma provides a geometric interpretation of the primitive elements.

\begin{lemma}[Gordon \cite{Gordon}]
\label{lem:Gordon's criterion}
An oriented simple closed curve $l$ on $\partial W$ is primitive if and only if
$l$ represents a primitive element of $\pi_1 (W)$.
\end{lemma}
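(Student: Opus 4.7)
The plan is to prove the two implications separately: the forward direction reduces to a short algebraic check via a Nielsen move, while the converse requires a geometric realization of Whitehead's algorithm for primitive elements. For the ``only if'' direction, suppose that $l$ has a dual disk $\overline D \subset W$. Extend $\overline D$ to a complete meridian system $\{\overline D, \overline D'_2, \ldots, \overline D'_g\}$ of $W$, producing a free basis $\{y_1, y_2, \ldots, y_g\}$ of $\pi_1(W)$ with $y_1$ dual to $\overline D$. Because $l$ meets $\overline D$ transversely in a single point, the intersection word of $l$ in this basis contains $y_1^{\pm 1}$ exactly once. After cyclic permutation this word has the form $y_1^{\varepsilon} u$, where $\varepsilon \in \{\pm 1\}$ and $u$ is a word in $y_2, \ldots, y_g$; the Nielsen automorphism $y_1 \mapsto y_1^{\varepsilon} u$, fixing the other generators, then exhibits the class of $l$ as a member of a free basis, so $c(l)$ is primitive.

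For the converse, suppose that $l$ spells a primitive, cyclically reduced word $w$ in the basis $\{x_1, \ldots, x_g\}$ determined by the fixed complete meridian system $\{\overline D_1, \ldots, \overline D_g\}$. The idea is to simplify the meridian system by successive handle slides, each realizing a Whitehead automorphism of $\pi_1(W)$, until $l$ meets a single meridian disk in exactly one point and is disjoint from the others. By Whitehead's peak reduction theorem on primitive elements, since $w$ is primitive there is a finite sequence of Whitehead automorphisms, each weakly reducing the cyclic word length, eventually carrying $w$ to a single generator $x_i^{\pm 1}$. Each such automorphism is realized by a composition of handle slides of the current meridian disks (together with possible reorderings and reorientations), and after performing the whole sequence we obtain a new complete meridian system $\{\overline D'_1, \ldots, \overline D'_g\}$ in which $l$ has intersection word $x_i^{\pm 1}$. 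Then $|l \cap \overline D'_i| = 1$ and $l$ is disjoint from the other disks, so $\overline D'_i$ is a dual disk for $l$.

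The main obstacle lies in this algebra-to-topology translation in the converse. One must verify that every Whitehead automorphism of $\pi_1(W)$ is induced by an actual composition of handle slides of a complete meridian system, and that when the cyclic word length drops algebraically, the geometric intersection $|l \cap (\overline D'_1 \cup \cdots \cup \overline D'_g)|$ drops as well (not merely the algebraic intersection). This matching between algebraic and geometric length reduction is the content of the Whitehead graph argument underlying peak reduction, and once it is in place the induction on cyclic word length closes the proof.
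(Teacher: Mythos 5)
The paper offers no proof of this lemma at all: it is quoted from Gordon's paper \emph{On primitive sets of loops in the boundary of a handlebody}, so the only available comparison is with the standard proof in that reference. Your ``only if'' direction is complete and correct (modulo the small remark, worth stating, that a dual disk must be non-separating because a separating disk has even geometric intersection number with every closed curve, and hence extends to a complete meridian system); the transvection $y_1 \mapsto y_1^{\varepsilon}u$ argument is exactly right. Your converse correctly identifies the strategy of the known proof, but it is an outline rather than a proof: the step you yourself flag as ``the main obstacle'' is not a loose end, it is the entire content of the theorem, and it is left unproved.

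Concretely, two things are missing. First, realizing a type II Whitehead automorphism by handle slides produces a new complete meridian system in which $l$ reads off the \emph{unreduced} image word; to conclude that the geometric intersection number drops to the cyclic word length you would need that minimal position with respect to a fixed meridian system always realizes the cyclically reduced length, and that is not achievable by isotopy alone. An arc of $l$ in the cut-open planar surface with both endpoints on the same boundary circle (the geometric source of a cancellation $x_i^{\epsilon}x_i^{-\epsilon}$) may be essential in that planar surface, in which case no bigon exists and no isotopy removes it; the reduction is instead accomplished by a \emph{wave move}, i.e.\ replacing $\overline D_i$ by one of the disks obtained by surgering it along that arc. Second, the length-reducing step of Gordon's argument does not run ``algebra first, then realize'': one shows that if $|l\cap(\overline D_1\cup\cdots\cup\overline D_g)|>1$ then Whitehead's cut-vertex lemma, applied to the Whitehead graph that is \emph{geometrically realized} by the arcs of $l$ in the $2g$-holed sphere $\partial B$ (where $B$ is the ball obtained by cutting $W$ along the meridian system), produces a wave or a band sum of two meridian disks that strictly decreases the geometric intersection number. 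Matching the abstract Whitehead graph of the reduced cyclic word with this arc configuration, and extracting the intersection-reducing disk replacement from a cut vertex, is precisely the algebra-to-topology translation you defer; without it the induction does not close. I recommend either supplying this argument or, as the authors do, simply citing Gordon.
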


Consider the free group $F_g = \langle x_1, x_2, \ldots, x_g \rangle$ of rank $g$.
Given $1 \leq g' < g$, let $w$ be a word in $\{x_1^{\pm 1}, x_2^{\pm 1}, \ldots, x_{g'}^{\pm 1} \}$.
It is clear that if the element represented by $w$ is primitive in the free group $F_{g'} = \langle x_1, x_2, \ldots, x_{g'} \rangle$,
then so it is in $F_{g} = \langle x_1, x_2, \ldots, x_{g} \rangle$.

\begin{lemma}
\label{lem: special case}
Suppose that a word $w$ in $\{x_1^{\pm 1}, x_2^{\pm 1}, \ldots, x_{g'}^{\pm 1} \}$, where $1 \leq g' < g$, represents a primitive element of $F_g$.
If there exists an oriented simple closed curve $l$ on $\partial W$ such that
$[w] \in c(l)$ and $l \cap \overline{D}_i = \emptyset$ for each $i \in \{g'+1, \ldots,  g\}$,
then $w$ also represents a primitive element of $F_{g'}$.
\end{lemma}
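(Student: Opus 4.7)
The plan is to pass from $W$ to a smaller handlebody in which Gordon's criterion (Lemma~\ref{lem:Gordon's criterion}) can be applied directly. First I would form the handlebody $W''$ obtained by cutting $W$ along $\overline{D}_{g'+1},\ldots,\overline{D}_g$. Since the full collection $\{\overline{D}_1,\ldots,\overline{D}_g\}$ cuts $W$ into a ball, $W''$ is a connected handlebody of genus $g'$ and $\{\overline{D}_1,\ldots,\overline{D}_{g'}\}$ is a complete meridian system for it, so $\pi_1(W'')$ is identified with $F_{g'}=\langle x_1,\ldots,x_{g'}\rangle$ read off against the same disks as before. Because $l$ is disjoint from $\overline{D}_{g'+1},\ldots,\overline{D}_g$, the curve $l$ lies on $\partial W''$ and still represents $[w]$ against this smaller meridian system. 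It will therefore suffice to exhibit a compressing disk in $W''$ meeting $l$ transversely once and then to invoke Lemma~\ref{lem:Gordon's criterion} inside $W''$.

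To produce such a disk, I would start from a dual disk $\overline{D}$ for $l$ in $W$, whose existence follows from Lemma~\ref{lem:Gordon's criterion} applied to $W$ since $w$ is assumed primitive in $F_g=\pi_1(W)$. If $\overline{D}\cap\overline{D}_j\ne\emptyset$ for some $j>g'$, I take an outermost subdisk $\Delta$ of $\overline{D}_j$ cut off by $\overline{D}\cap\overline{D}_j$, with outermost arc $\delta$. The arc $\delta$ is disjoint from $l$ (since $l\cap\overline{D}_j=\emptyset$), so the unique point of $\overline{D}\cap l$ lies in the interior of exactly one of the two subdisks into which $\delta$ separates $\overline{D}$; I keep the resulting surgery disk $D_1$ containing this point. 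Then $|D_1\cap l|=1$, so the algebraic intersection $\partial D_1\cdot l=\pm 1$ in $H_1(\partial W)$, which forces $\partial D_1$ to be essential on $\partial W$ and hence $D_1$ to be essential in $W$. By the remarks of Section~\ref{sec: Surgery paths in disk complexes}, $|D_1\cap\overline{D}_j|<|\overline{D}\cap\overline{D}_j|$; and for $k\ne j$ with $k>g'$, $\Delta$ is disjoint from $\overline{D}_k$ (the meridian disks of the system being mutually disjoint), so after minimizing by isotopy one has $|D_1\cap\overline{D}_k|\le|\overline{D}\cap\overline{D}_k|$. Iterating strictly reduces the complexity $\sum_{k>g'}|D\cap\overline{D}_k|$, and in finitely many steps yields a dual disk $\overline{D}'$ for $l$ in $W$ that is disjoint from $\overline{D}_{g'+1}\cup\cdots\cup\overline{D}_g$.

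Such $\overline{D}'$ is a properly embedded essential disk in $W''$ meeting $l$ transversely once, so $l$ is primitive on $\partial W''$. A second application of Lemma~\ref{lem:Gordon's criterion}, now inside $W''$, then gives that $[w]$ is primitive in $F_{g'}$, as required. The main delicate point, and the step on which I expect the care to fall, is the surgery verification: one must check that the single transverse intersection with $l$ and the essentiality of the disk are genuinely preserved at each step, and that the combinatorial complexity strictly decreases so that the process terminates. Once that is in place, the argument is a clean two-step application of Gordon's criterion in the nested handlebodies $W''$ and $W$.
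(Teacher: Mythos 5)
Your proposal is correct and follows essentially the same route as the paper: obtain a dual disk of $l$ in $W$ via Gordon's criterion, make it disjoint from $\overline{D}_{g'+1},\ldots,\overline{D}_g$ by iterated outermost-arc surgery (keeping the surgered piece that still meets $l$ once), and then apply Gordon's criterion again in the cut-open genus-$g'$ handlebody. The extra details you supply — the homological argument for essentiality and the explicit decreasing complexity $\sum_{k>g'}|D\cap\overline{D}_k|$ — are correct elaborations of steps the paper leaves implicit.
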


\begin{proof}
Suppose that $l$ represents a primitive element of $F_g$.
By Lemma \ref{lem:Gordon's criterion}, there exists a dual disk $\overline{D}$ of $l$ in $W$.
Let $W'$ be the genus-$g'$ handlebody obtained by cutting $W$ along $\overline{D}_{g'+1} \cup \cdots \cup \overline{D}_g$.
If the disk $\overline{D}$ is disjoint from $\overline{D}_j$ for each $j \in \{g'+1, \ldots, g\}$, then $\overline{D}$ is again a dual disk of $l$ in $W'$.
Thus $l$ is a primitive curve on $\partial W'$, and so by Lemma \ref{lem:Gordon's criterion} again,
$w$ represents a primitive element of $\pi_1(W') = \langle x_1, x_2, \ldots, x_{g'} \rangle$.

If $\overline{D}$ intersects $\overline{D}_j$ for some $j \in \{g'+1, \ldots, g\}$, then
we choose an outermost subdisk of $\overline{D}_j$ cut off by $\overline{D} \cap \overline{D}_j$.
Then exactly one of the two disks, say $\overline{D}'$,
obtained by surgery on $\overline{D}$ along $\overline{D}_j$ with this outermost subdisk is again a dual disk of $l$ in $W$.
Note that $\overline{D}'$ has fewer arcs of intersection with $\overline{D}_j$ than $\overline{D}$ had.
If $\overline{D}'$ still intersects $\overline{D}_{g'+1} \cup \cdots \cup \overline{D}_g$,
we repeat this process finitely many times to obtain a dual disk $\overline{D}''$ of $l$ disjoint from $\overline{D}_j$ for each $j \in \{g'+1, \ldots, g\}$.
\end{proof}

\section{Proof of Theorem \ref{thm: main theorem}}
\label{sec: Proof of Theorem}

We first consider the genus-$3$ Heegaard splitting $(V, W; \Sigma)$ of the $3$-sphere.
Fix a complete meridian system $\{ \overline{D}_1, \overline{D}_2, \overline{D}_3 \}$ for $W$,
and assign the symbol $x_i$ to the oriented circle $\partial \overline{D}_i$ for each $i \in \{1, 2, 3\}$.
Then any oriented simple closed curve $l$ on $\partial W$ determines a word $w$ of the free group $\pi_1 (W) = \langle x_1, x_2, x_3 \rangle$ up to cyclic permutation.

Figure \ref{fig1} depicts two disks $D$ and $E$ in $V$.
The disk $E$ is the band sum of two parallel copies of the disk in Figure \ref{fig2}(a) with the ``half-twisted'' band wrapping around $\partial \overline{D}_3$ as described.
It is obvious that $D$ is a primitive disk with the dual disk $\overline {D}_2$.
The disk $E$ is also primitive by Lemma \ref{lem:Gordon's criterion} since
we read off a word determined by $\partial E$ from Figure \ref{fig1} (with a suitable choice of orientations) as
$$(x_1 x_2^{-1} x_1 x_2^{-1} x_1 x_2 x_1^{-1} x_2 x_2 x_1^{-1})
(x_1 x_2^{-1} x_2^{-1} x_1 x_2^{-1} x_1^{-1} x_2 x_1^{-1} x_2 x_1^{-1}) x_2,$$
and this word is reduced to $x_2$, representing a primitive element.

\begin{figure}[!hbt]
\centering
\includegraphics[width=9cm,clip]{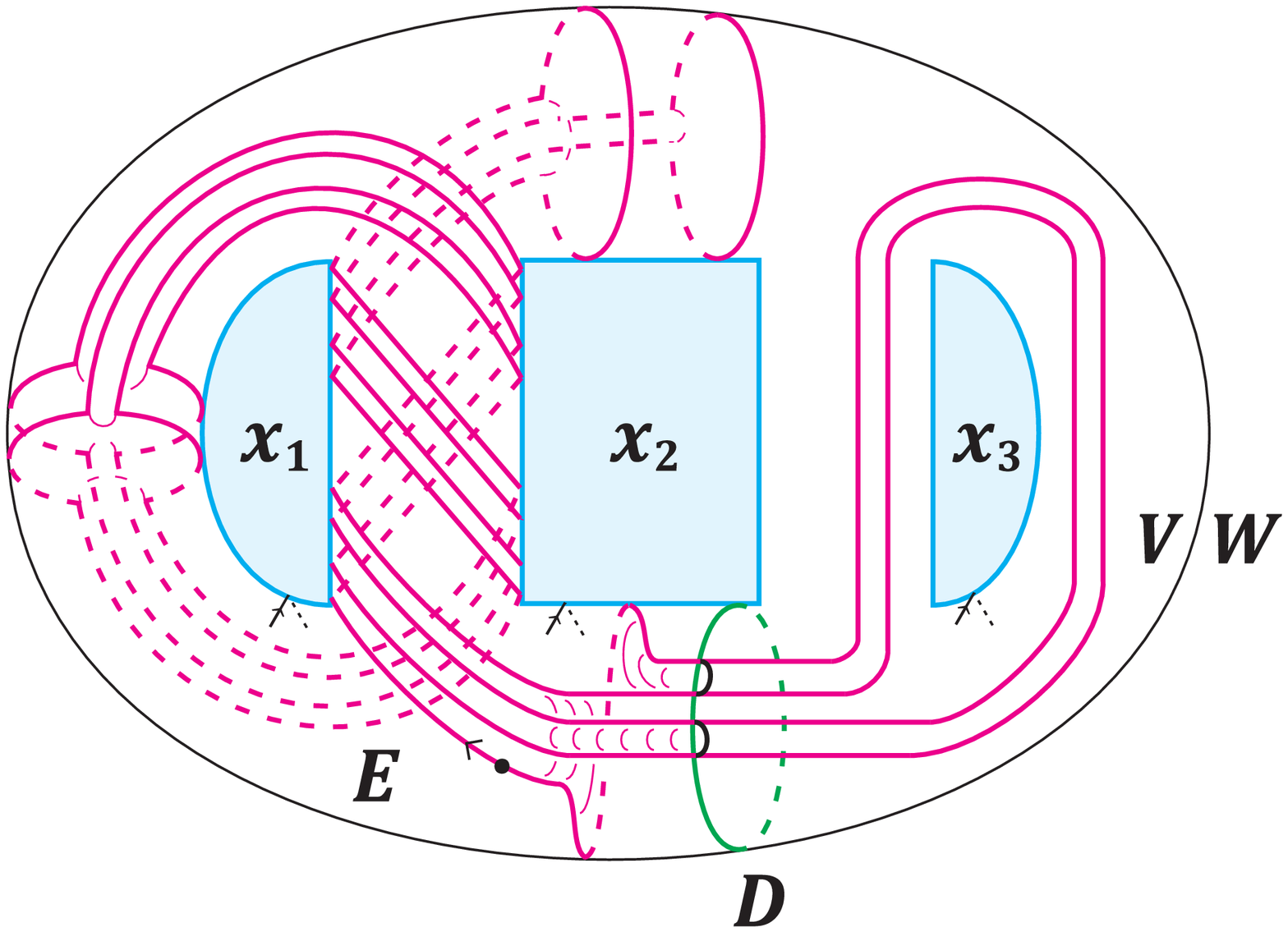}
\caption{Primitive disks $D$ and $E$ in $V$.}\label{fig1}
\end{figure}

\begin{figure}[!hbt]
\centering
\includegraphics[width=7cm,clip]{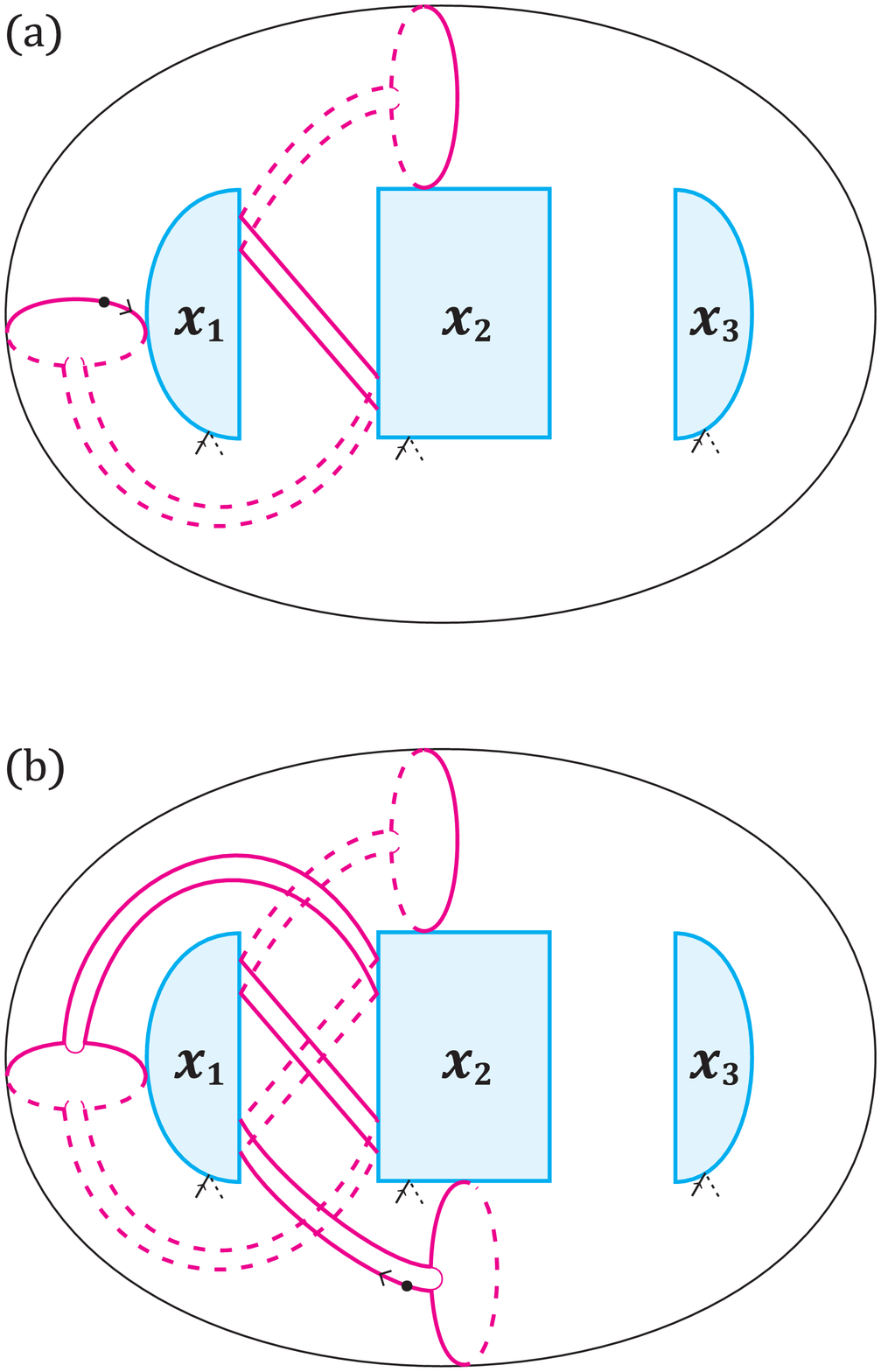}
\caption{The disks obtained by surgery.}\label{fig2}
\end{figure}

The intersection $D \cap E$ consists of two arcs.
So for each of $D$ and $E$, there are two outermost disks.
Any disk obtained by any surgery on $D$ along $E$ (and on $E$ along $D$) is one of the two disks in Figure \ref{fig2}.
The disk in Figure \ref{fig2}(a) determines a word $w_1$ of the form $x_1 x_2^{-1} x_1 x_2 x_1^{-1} x_2$,
while the disk in Figure \ref{fig2}(b) determines a word $w_2$ of the form
$x_1 x_2^{-1} x_1 x_2^{-1} x_1 x_2 x_1^{-1} x_2 x_2 x_1^{-1} x_2$.
Both disks are disjoint from $\partial \overline{D}_3$ and hence the generator $x_3$ does not appear in both $w_1$ and $w_2$.
So $w_1$ and $w_2$ represent elements of the free group $\langle x_1, x_2 \rangle$.
We observe that each of $w_1$ and $w_2$ is cyclically reduced and contains $x_1$ and $x_1^{-1}$ simultaneously (also $x_2$ and $x_2^{-1}$ simultaneously).
Thus, by Osborne-Zieschang \cite{OZ81}, the elements represented by $w_1$ and $w_2$ are not primitive in the free group $\langle x_1, x_2 \rangle$, and
hence are not primitive in the free group $\langle x_1, x_2, x_3 \rangle$ by Lemma \ref{lem: special case}.
Thus the two disks in Figure \ref{fig2} are not primitive in $V$ by Lemma \ref{lem:Gordon's criterion}.

So far we gave an example for the genus-$3$ Heegaard splitting of the $3$-sphere,
but the same argument applies for any genus $g$ with $g \ge 3$.
See Figure \ref{fig3}.

\begin{figure}[!hbt]
\centering
\includegraphics[width=9cm,clip]{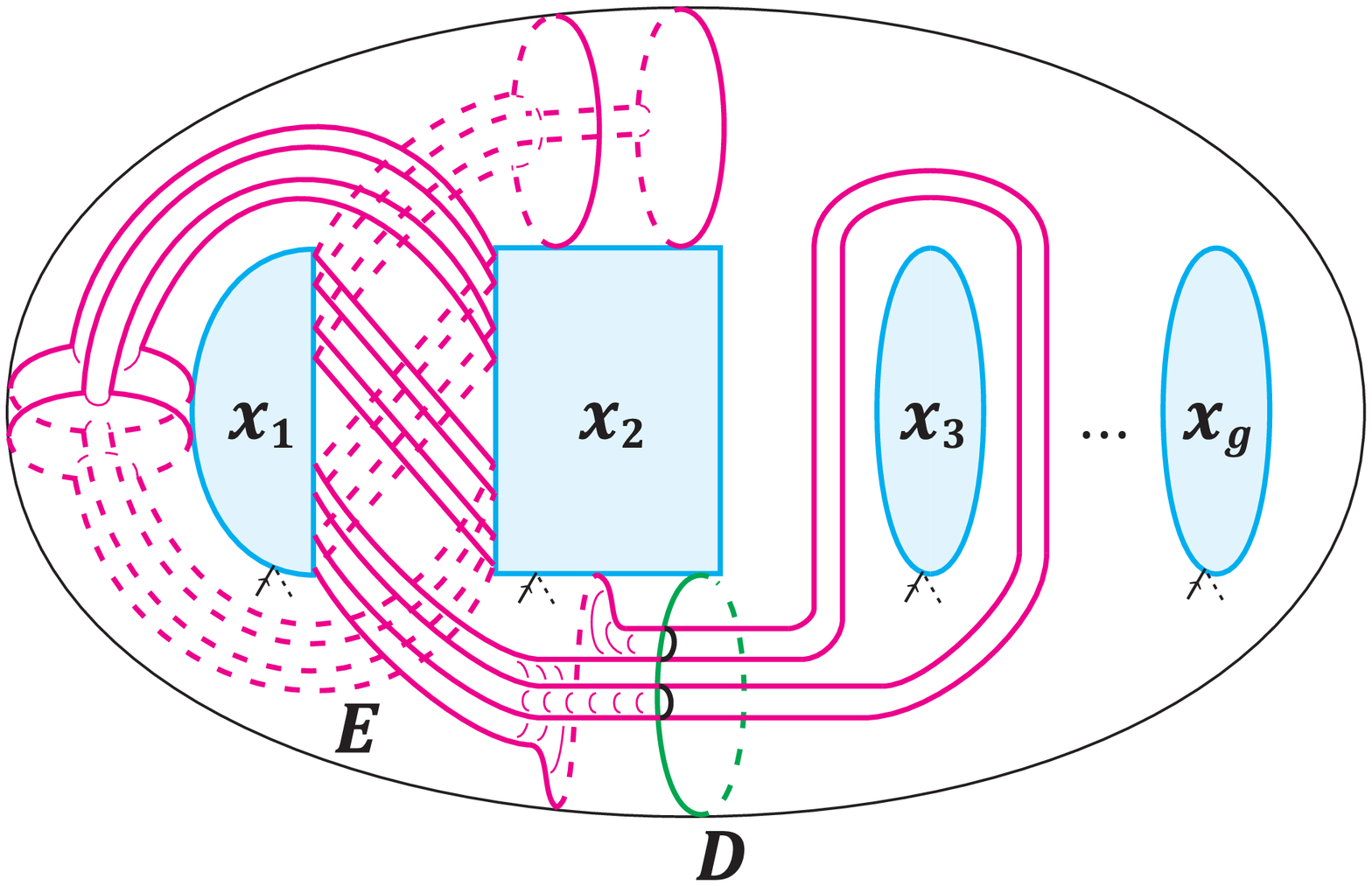}
\caption{Primitive disks $D$ and $E$ for the case of genus $g \ge 3$.}
\label{fig3}
\end{figure}

\vspace{0.1cm}

{\noindent \bf Acknowledgments.}
The first-named author was supported by the Basic Science Research Program through the National Research Foundation of Korea (NRF)
funded by the Ministry of Education (NRF-201800000001768).
The second-named author is supported in part by JSPS KAKENHI Grant Numbers 15H03620, 17K05254, 17H06463, and JST CREST Grant Number JPMJCR17J4.
The third-named author was supported by the Basic Science Research Program through the National Research Foundation of Korea (NRF)
funded by the Ministry of Education (2018R1D1A1A09081849).


\begin{thebibliography}{00}

\bibitem{Cho08} {\bf S. Cho},
Homeomorphisms of the $3$-sphere that preserve a Heegaard splitting of genus two,
Proc. Amer. Math. Soc. {\bf 136} (2008), 1113--1123.

\bibitem{Cho13} {\bf S. Cho},
Genus-two Goeritz groups of lens spaces,
Pacific J. Math. {\bf 265} (2013), no. 1, 1--16.

\bibitem{CK14} {\bf S. Cho, Y. Koda},
The genus two Goeritz group of $S^2 \times S^1$,
Math. Res. Lett. {\bf 21} (2014), no. 3, 449--460.

\bibitem{CK16} {\bf S. Cho, Y. Koda},
Connected primitive disk complexes and genus two Goeritz groups of lens spaces,
Int. Math. Res. Not. {\bf IMRN 2016}, no. 23, 7302--7340.

\bibitem{CK18} {\bf S. Cho, Y. Koda},
The mapping class groups of reducible Heegaard splittings of genus two,
Trans. Amer. Math. Soc. {\bf 371}, no. 4, 2473--2502.

\bibitem{Gordon} {\bf C. McA. Gordon},
On primitive sets of loops in the boundary of a handlebody,
Topology Appl. {\bf 27} (3) (1987), 285--299.

\bibitem{McCullough}{\bf D. McCullough},
Virtually geometrically finite mapping class groups of $3$-manifolds,
J. Differential Geom. {\bf 33} (1991), no. 1, 1--65.

\bibitem{OZ81}{\bf R. P. Osborne, H. Zieschang},
Primitives in the free group on two generators,
Invent. Math. {\bf 63} (1981), no. 1, 17--24.

\bibitem{Waldhausen}{\bf F. Waldhausen},
Heegaard-Zerlegungen der 3-Sph\"are,
Topology {\bf 7} (1968), 195--203.

\end{thebibliography}
\end{document}